\newtheorem{theorem}{Theorem}[section]
\newtheorem{proposition}[theorem]{Proposition}
\newtheorem{lemma}[theorem]{Lemma}
\newtheorem{remark}[theorem]{Remark}
\newcommand{\C}{\mathbb{C}}
\newcommand{\Z}{\mathbb{Z}}
\newcommand{\lra}{\longrightarrow}
\newcommand{\R}{\mathbb{R}}
\newcommand{\p}{\mathbb{P}}
\newcommand{\eps}{\varepsilon}
\newcommand{\bei}{\begin{enumerate}[label=(\roman*)]}
\newcommand{\Q}{\mathbb{Q}}
\newcommand{\N}{\mathbb{N}}
\newcommand{\LL}{\mathcal{L}}
\newcommand{\ra}{\rightarrow}
\begin{document}

\vskip 0.5cm 

\title{The  arithmetic volume of hypersurfaces in toric varieties and Mahler measures}

\author{Mounir Hajli}

\address{School of Science, Westlake
University, Hangzhou
310024, China}

\email{hajli@westlake.edu.cn}

%\date{ \today, \currenttime}

\maketitle

\begin{abstract}
In this paper we determine the canonical arithmetic volume of hypersurfaces in smooth  projective toric varieties. As a consequence, we prove a generalized Hodge index theorem on hypersurfaces in  smooth projective  toric varieties.

\end{abstract}
\begin{center}
{\small 
Keywords: \textit{Arithmetic volume; Height;   Mahler measure}}.
\end{center}

\begin{center}
{\small
MSC:  \textit{14G40; 11G50.
}}
\end{center}

\tableofcontents

\section{Introduction}

Let ${Z}$ be    an arithmetic variety over $\mathrm{Spec}(\Z)$, that is  a projective, integral and flat scheme  over $\Z$. 
 Let $d+1$ be the absolute dimension of ${Z}$.     Let $\overline{{\LL}}=(\LL,\|\cdot\|_\phi)$ be a Hermitian  line bundle on ${Z}$,  such that 
the norm $\|\cdot\|_\phi$ is defined by  a continuous  weight  $\phi$.
For any $k\in \N_{\geq 1}$, 
$k\overline{{\LL}}$ denotes $\overline{{\LL}}^{\otimes k}$.\\

 The  arithmetic volume  $\widehat{\mathrm{vol}}(\overline{{\LL}})$  is defined by 
\[
\widehat{\mathrm{vol}}(\overline{{\LL}})=
\underset{k\rightarrow \infty}{\limsup} \frac{\widehat{h}^0(\overline{H^0(Z,k\LL)}_{(\sup,k\phi)})
}{k^{d+1}/(d+1)!},
\]
where $\widehat{h}^0(\overline{H^0(Z,k\LL)}_{(\sup,k\phi)}):=\log \# \{ s\in H^0({Z},k{L}) \mid \|s\|_{\sup,k\phi}\leq 1 \}$.
This arithmetic invariant was introduced by Moriwaki in \cite{Moriwaki2}.

One of the main results of \cite{Moriwaki2} can be stated as follows.
Let $\overline{{\LL}}$ be a  nef $\mathcal{C}^\infty$ Hermitian  line bundle on  $Z$ (see Section \ref{sec3}). Then, the height of $Z$ with respect to $\overline{{\LL}}$ equals to the 
 arithmetic volume of $Z$ with respect to  $\overline{{\LL}}$. Namely
\begin{equation}\label{voldeg}
\widehat{\mathrm{vol}}(\overline{{\LL}})=h_{\overline{{\LL}}}(Z) ,%
\end{equation}
(see \cite{Character} for the definition of the heights). The proof of this result is difficult and relies on  the property of   continuity of
 arithmetic volume function proved in \cite{Moriwaki2}. \\

The explicit determination of the arithmetic volume is a very difficult problem.   When $Z$ is a toric variety, and $\overline{\LL}$ is toric in the sense of Burgos-Philippon-Sombra \cite{Burgos2},  the arithmetic volume of  $Z$ with respect to  $\overline{\LL}$ 
possesses a nice integral representation. Namely the following equation.
\begin{equation}\label{vol-rep}
\widehat{\mathrm{vol}}(\overline \LL)=(d+1)! \int_{\Delta_\LL} 
\max(0,\vartheta_{\overline{{\LL}}})d\mathrm{vol}_M,
\end{equation}
where $M$ is a free $\Z$-module of rank $d$, and
 $\Delta_\LL$ is a rational polytope in $M\otimes_\Z\R$ attached to
$\LL$, $\vartheta_{\overline{{\LL}}}$ a concave function defined in terms of
the metric of $\LL$, and $d\mathrm{vol}_M$ is a normalized 
Lebesgue measure on   $M\otimes_\Z\R$,
see \cite{Burgos3, MounirKJM, Moriwaki}.  We can  prove   \eqref{vol-rep} using     three ingredients. Namely, the following equation 
\[
\widehat{\mathrm{vol}}(\overline \LL)=\underset{k\rightarrow \infty}{\limsup} \frac{
\log \# \{ s\in H^0({Z},k{\LL}) \mid \|s\|_{\mu,k\phi}\leq 1 \}}{k^{d+1}/(d+1)!},
\]
where  $\|\cdot\|_{\mu,k\phi}$ is a Euclidean  norm (see Section \ref{sec2}), the fact that the Euclidean lattice $\overline{H^0(Y,k\LL)}_{(\mu, k\phi)}$ possesses a natural  orthonormal basis with respect to $\|\cdot\|_{\mu,k\phi}$ and some classical results from the geometry of numbers \cite{SouleLattice}. Note that in 
\cite{Burgos3}, the authors bypass the use  of the $L^2$-norm with the fact that the basis of toric sections is orthogonal with respect to the sup-norm for all places, both Archimedean or ultrametric.  

Burgos, Moriwaki, Philippon and Sombra 
\cite{Burgos3} gave a combinatorial proof of
\eqref{voldeg} in the toric setting.\\

Let $M$ be a free $\mathbb{Z}$-module of rank $d$ and $N$ its  dual. We  consider a fan $\Sigma$ on $M_\mathbb{R}=
 M\otimes_\mathbb{Z} \R$ and we denote by $Y
 $  the  associated  toric variety over $\mathbb{Z}$, see for instance \cite{Oda} or \cite[Paragraph 2.2]{Maillot}.   { In the sequel, we assume that $Y$ is   smooth  and projective (this is equivalent to the fact that  $\Sigma$ is nonsingular and the support of $\Sigma$ is  $M_\mathbb{R}$, see \cite[Theorems 1.10 and  1.11]{Oda})}.
   We  set $\mathbb{T}_M:=\mathrm{Hom}_\mathbb{Z}(M,\mathbb{C}^\ast)\simeq (\mathbb{C}^\ast)^d$
  and we denote by $\mathbb{S}_N\simeq (\mathbb{S}^1)^d$ its compact torus. We have an open
   dense immersion  $\mathbb{T}_M\hookrightarrow X$ with an action of $\mathbb{T}_M$ on $Y$ which extends the action of $\mathbb{T}_M$
   on itself by translations. There exists a canonical volume form on $\mathbb{S}_N$  (see \cite[Remarque 6.1.2, and p. 97]{Maillot}) which is denoted by $d\mu_\infty$.
   
   Let $s$ be a nonzero rational function on $Y$.  Mahler measure of $s$ is defined as follows.
\[
m(s)=\int_{\mathbb{S}_N} \log|s|d\mu_\infty.
\] 

Let $\LL$ be an equivariant line bundle on $Y$. It is well known that $\LL$ possesses a natural continuous Hermitian metric which we denote by $\|\cdot\|_{\phi_\infty}$ and  which is given in terms of the combinatorial structure  of $Y$  \cite[Paragraph 3.4]{Maillot}. This metric is called the canonical metric of $\LL$. We  denote this Hermitian line bundle by  $\overline {\LL}_{\phi_\infty}$. 
Let $\p^N$ be the projective space of dimension $N$. The canonical metric of the standard  line bundle $\mathcal O(1)$ on $\p^N$     is given  as follows.
\[
\|\cdot\|_{\phi_\infty}=\frac{|\cdot|}{\max(|x_0|,\ldots,|x_N|)},
\]
where $x_0,x_1,\ldots,x_N$ are the standard homogeneous coordinates in $Y$.
Let $\LL$ be an equivariant line bundle generated by its global sections on $Y$.  Then $\LL$ defines an equivariant morphism $\psi_\LL$ on $Y$ with image in $\p^{h^0(Y, \LL)-1}$.  We can show that
$\overline{\LL}_{{\phi_\infty}}=\psi_\LL^\ast \overline{\mathcal O(1)}_{\phi_\infty}$, see \cite[Paragraph 3.3.3]{Maillot}.  It is worth noting 
 that, up to a positive multiplicative constant, $ \|\cdot\|_{\phi_\infty}$  is the unique toric metric on $\LL$ which has Bernstein-Markov property (see Section \ref{sec2})  with respect to $\mu_\infty$, see Theorem
 \ref{BMtoric}.\\

Let $X$ be a hypersurface in $Y$ which is defined by a
nonzero rational section $s$ of an equivariant line bundle $\mathcal E$ on $Y$. Let
$D$ be an equivariant divisor on $Y$ such that $\mathcal E\simeq \mathcal O(D)$.   Let $s_D$ be the rational function of $Y$ which corresponds to
$s$ by this isomorphism.  The canonical height of the hypersurface $X$ with respect to
$\overline {\LL}_{\phi_\infty}$ is given in terms of Mahler measure of $s_D$. Namely
\[
h_{\overline { \LL}_{\phi_\infty}}(X)=\deg(\LL_\Q)\  m(s_D),
\]
see \cite[Proposition 7.2.1]{Maillot}.  In the sequel, 
$\overline{\mathcal E}_{\psi_\infty}$ denotes the 
line bundle $\mathcal E$ endowed with its canonical metric $\|\cdot\|_{\psi_\infty}$.\\

Our first goal  is to determine the canonical arithmetic volume of  $X$ with respect to
$\overline{\LL}_{{\phi_\infty}}$.
Our first result is  the following equation.
\begin{equation}\label{mt}  %\label{volXY}
\widehat{\mathrm{vol}}_X({\overline {\LL}_{\phi_\infty}})= \mathrm{vol}(\LL_\Q) m(s_D),
\end{equation}

where $\widehat{\mathrm{vol}}_X({\overline {\LL}_{\phi_\infty}})$ is the arithmetic volume of
$X$ with respect to $\overline {\LL}_{\phi_\infty}$ and $ \mathrm{vol}(\LL_\Q)$ is the geometric volume of
$\LL_\Q$, see Theorem \ref{maintheorem}. 
 As a first application, we deduce that 
\begin{equation}\label{volhinfty}
\widehat{\mathrm{vol}}_X({\overline{ \LL}_{\phi_\infty}})=h_{ \overline{\LL}_{\phi_\infty}}({{X}}).
\end{equation}
 In \cite[Proposition 1.5]{MounirKJM}, we proved   that  $\overline{\LL}_{\phi_\infty}$ is arithmetically nef but not big, and hence not arithmetically ample on $Y$ (see Section \ref{sec3} for the definitions of arithmetically nef, big and ample Hermitian line bundles). \\ 

Classically,  \eqref{voldeg} can be used to prove \eqref{volhinfty}. 
Let us outline the proof of this. Let us  first recall  that
 $\overline{\LL}_{\phi_\infty}$ 
 can be approximated by a sequence of 
nef $\mathcal{C}^\infty$ Hermitian  line bundles \cite[Paragraph 2.1]{Zhang}. Then
 use the fact  that the arithmetic volume function and the height are
continuous with respect to the variation of
the metrics \cite[Proposition 3.2.2]{BoGS} and  \cite[Proposition 4.2]{Moriwaki2}. 

Our method for the proof of \eqref{volhinfty} is different and more direct.  As a consequence of this study,  we show that  
the equation

\[
\widehat{\mathrm{vol}}_X({\overline{\LL}_{\phi}})=h_{ \overline{ \LL}_{\phi}}({{X}}),
\]
holds  for every  Hermitian  line ${\overline{ \LL}_{\phi}}$ generated by its small sections on $Y$,  
  see Theorem \ref{ThmVolDeg}.  Thus we  partially recover \eqref{voldeg}. \\

We  proved a generalized Hodge index theorem on toric varieties, see  \cite[Theorem 5.5]{HajliTAMS}. In this paper, we show that the methods of \cite{HajliTAMS} can be used to prove a generalized Hodge index theorem on hypersurfaces in $Y$. 
Let $\phi$ 
be a semipositive weight on $\LL$.  We shall prove that
\[
\widehat{\mathrm{vol}}_X(\overline{\LL}_\phi)\geq h_{\overline{\LL}_\phi}(X),
\]
see Theorem \ref{Hodge}.\\

The approach proposed here for the study of these problems   relies on the 
combinatorial structure of the toric variety
$Y$.  The space $Y(\C)$ possesses a canonical measure which is denoted by
 $\mu_\infty$.  Its restriction to 
 the compact torus of $Y(\C)$ is  a Haar measure. 
To $\mu_\infty$, $\overline{\LL}_{\phi_\infty}$ 
and $\overline{\mathcal E}_{\psi_\infty}$ we attach an Euclidean lattice 
$ \overline{{H^0}\left({{Y}},k \LL+\mathcal E\right)}_{(\mu_\infty,k\phi_\infty+\psi_\infty)}$  for every 
$k\in \N$ (see Section \ref{sec2}). We call 
$ \overline{{H^0}\left({{Y}},k\LL+\mathcal E \right)}_{(\mu_\infty,k\phi_\infty+\psi_\infty)}$ the 
\textit{canonical Euclidean lattice} associated with $\mu_\infty$, $k\overline{\LL}_{\phi_\infty}$ and 
$\overline{\mathcal E}_{\psi_\infty}$.  This lattice plays a central role in this paper.

The Euclidean lattice 
$ \overline{{H^0}\left({{Y}},k\LL+\mathcal E \right)}_{(\mu_\infty,k\phi_\infty+\psi_\infty)}$ induces a structure of  Euclidean lattice on 
${H^0}\left({{X}},(k\LL+\mathcal E)_{|_X} \right)$, which we denote by 
$ \overline{{H^0}\left({{X}},(k\LL+\mathcal E)_{|_X} \right)}_{\mathrm{sq},(\mu_\infty,k\phi_\infty+\psi_\infty)}$ (see  Section \ref{sec2} for more details on the construction). We are naturally led to study the following limits
\[
 \limsup_{k\rightarrow \infty}\frac{{\hat h^0}(\overline{{H^0}({{X}},(k\LL+ \mathcal{E})_{|_{{X}}}) }_{{\mathrm{sq}},(\mu_\infty,k{{\phi_\infty+\psi_\infty}})} )}{k^d/d!},
\]
and
\[
\limsup_{k\rightarrow \infty}\frac{\widehat{\deg}(\overline{{H^0}({{X}},(k\LL+ \mathcal{E})_{|_{{X}}}) }_{ \mathrm{sq}, (\mu_\infty,k\phi_\infty+\psi_\infty)}  )}{k^d/d!}.
\]
In Proposition \ref{4.3} we prove that  
\[
\limsup_{k\rightarrow \infty} \frac{{{{\hat{h}^0}}} \left(\overline{{H^0}({{X}},(k\LL+\mathcal E)_{|_{{X}}}) }_{{\mathrm{sq}}, (\mu_\infty, k\phi_\infty+\psi_\infty)}\right) }{k^d/d!}=\limsup_{k\rightarrow \infty} \frac{{{{\hat{h}^0}}} \left(\overline{{H^0}({{X}},k\LL_{|_{{X}}}) }_{{\mathrm{sq}}, (\mu_\infty, k\phi_\infty)}\right) }{k^d/d!}.
\]
An important point to note here is that 
 the $\chi$-arithmetic volume of canonical Euclidean lattices is zero.
We use  the additivity 
of the  $\chi$-arithmetic degree on admissible metrized sequences, 
and a theorem due to  Szeg\"o and generalized by Deninger, to  deduce the following inequality
\[
\widehat{\mathrm{vol}}_X({\overline {\LL}_{\phi_\infty}})\geq \mathrm{vol}(\LL_\Q)
m(s_D).
\]

Let   $(\|\cdot\|_{\phi_p})_{p=1,2,\ldots}$ be  a sequence of smooth Hermitian  metrics on $\LL$ converging uniformly to $\|\cdot\|_{\phi_\infty}$.

We shall show that
\[
\lim_{p\rightarrow \infty}  \limsup_{k\rightarrow \infty}\frac{{\hat h^0}(\overline{{H^0}({{X}},{k\LL}_{|_{{X}}}) }_{{\mathrm{sq}},(\mu,k{{\phi_p}})} )}{k^d/d!} =\limsup_{k\rightarrow \infty}\frac{{\hat h^0}(\overline{{H^0}({{X}},{k\LL}_{|_{{X}}}) }_{{\mathrm{sq}},(\mu_\infty,k{{\phi_\infty}})} )}{k^d/d!},
\]
where $\mu$ is any smooth probability measure on $Y$ (see Proposition \ref{limit}). Using Bernstein-Markov's property (see Section \ref{sec2})  we shall deduce that
\[
\lim_{p\rightarrow \infty}  \widehat{\mathrm{vol}}_X(\overline{\LL}_{\phi_p})=\limsup_{k\rightarrow \infty}\frac{{\hat h^0}(\overline{{H^0}({{X}},{k\LL}_{|_{{X}}}) }_{{\mathrm{sq}},(\mu_\infty,k{{\phi_\infty}})} )}{k^d/d!}.
\]
It is not difficult to prove   that 
\[
\lim_{p\rightarrow \infty}  \widehat{\mathrm{vol}}_X(\overline{\LL}_{\phi_p})=  \widehat{\mathrm{vol}}_X(\overline{\LL}_{\phi_\infty}).
\]
Using the technical lemma \ref{h1} we should deduce the following
\[
\lim_{k\rightarrow \infty}\frac{\widehat{\deg}(\overline{{H^0}({{X}},(k\LL+ \mathcal{E})_{|_{{X}}}) }_{ \mathrm{sq}, (\mu_\infty,k\phi_\infty+\psi_\infty)}  )}{k^d/d!}= \limsup_{k\rightarrow \infty}\frac{{\hat h^0}(\overline{{H^0}({{X}},(k\LL+ \mathcal{E})_{|_{{X}}}) }_{{\mathrm{sq}},(\mu_\infty,k{{\phi_\infty+\psi_\infty}})})}{k^d/d!}.
\]

Gathering all these computations, we shall conclude the proof of
\eqref{mt}.

\section{Preliminaries}\label{sec2}

A normed $\Z$-module $\overline M=(M,\|\cdot\|)$ is  a $\Z$-module of finite type
endowed with a norm $\|\cdot\|$ on the $\C$-vector space
$M_\C=M\otimes_\Z \C$.  Let $M_{\mathrm{tors}}$ denote  the torsion-module 
of $M$, $M_{\mathrm{free}}=M/M_{\mathrm{tors}}$, and 
$M_\R=M\otimes_\Z\R$. We let $B=\{m\in M_\R: \|m\|\leq 1\}$. There exists
a unique Haar measure  on  $M_\R$ such that the volume of $B$ is $1$. We let
\[
\hat{\chi}(M,\|\cdot\|)=\log \# M_{\mathrm{tors}}-\log \mathrm{vol}(M_\R/ (M/ M_{\mathrm{tors}}) ).
\]

Equivalently, we have
\[
\hat{\chi}(M,\|\cdot\|)=\log \# M_{\mathrm{tors}}-\log\left( \frac{\mathrm{vol}(M_\R/(M/M_{\mathrm{tor}} ))}{\mathrm{vol}(B(M,\|\cdot\|))}\right),
\]
for any choice of a Haar measure of
$M_\R$. \\

The arithmetic degree of $(M,\|\cdot\|)$ is defined as follows

\[
\widehat{\deg}(M,\|\cdot\|)=\widehat{\deg} \overline M=  \hat{\chi}(\overline M)-\hat{\chi}(\overline{\Z}^r),
\]
where $\hat{\chi}(\overline{\Z}^r)=-\log \left( \Gamma(\frac{r}{2}+1)\pi^{-\frac{r}{2}}  \right)$, with $r$ is the rank of $M\otimes_\Z\Q$.  \\

When the norm $\|\cdot\|$ is induced by a Hermitian  product $(\cdot,\cdot)$,
we say that $\overline M$ is an 
 Euclidean lattice. In this situation, we have 
\[
\widehat{\deg}(\overline M)=\log \# M/(s_1,\ldots,s_r)-\log \sqrt{\det((s_i,s_j))_{1\leq i,j\leq r}  },
\]
where $s_1,\ldots,s_r$ are elements of $M$ such that their images 
in $M_\Q$ form a basis. \\

We define $\widehat{H}^0(\overline M)$ and  $\widehat{h}^0(\overline M)$ 
to be
\[
\widehat{H}^0(\overline M)=\left\{ m\in M : \|m\|\leq 1  \right\}\quad\text{and}\quad \widehat{h}^0(\overline M)=\log \# \widehat{H}^0(\overline M).
\]
We let
\[
\widehat{H}^1(\overline M):=\widehat{H}^0(\overline M^\vee)
\quad\text{and}\quad \widehat{h}^1(\overline M):=\widehat{h}^0(\overline M^\vee),
\]
where $\overline M^\vee$ is the $\Z$-module $M^\vee=\mathrm{Hom}_\Z(M,\Z)$ endowed with the dual norm $\|\cdot\|^\vee$ defined as follows
\[
\|f\|^\vee=\sup_{x\in M_\R\setminus\{0\}}\frac{ |f(x)|}{\|x\|}, \quad\forall f\in M^\vee.
\]

Gillet and Soul\'e \cite{SouleLattice} proved   the following \begin{equation}\label{h0h1}
-\log(6) \ \mathrm{rank}\ M \leq  \widehat{h}^0(\overline M)-\widehat{\deg}(\overline M)-\widehat{h}^1(\overline M)\leq \log(\tfrac{3}{2}) \mathrm{rank}\, M+2\log ((\mathrm{rank}\ M)!),
\end{equation}
see also \cite[Proposition 2.1]{Moriwaki2}.\\

A short exact  sequence of Euclidean lattices 
\[
0\longrightarrow \overline{N} \overset{i}{\longrightarrow} \overline{M} \overset{\pi}{\longrightarrow } \overline{Q}\longrightarrow 0,
\]
is said to be admissible if $i_\R$ and the transpose 
of $\pi_\R$ are isometries with respect to the Euclidean 
norms on $N_\R, M_\R$ and $Q_\R$ defining the Euclidean
lattices $\overline N, \overline M$ and $\overline Q$ (for more details see \cite{BostTheta}).\\

We denote by $\|\cdot\|_{{\mathrm{sq}}}$ the norm on $Q$ induced by $\overline M$. It is given by
\begin{equation}\label{sqdef}
\|v\|_{{\mathrm{sq}}}:=\inf_{\substack{m\in M_\R,\\
 \pi_\R(m)=v}} \|m\|,\quad\forall v\in Q_\R.
\end{equation}

Let $Y$ be an arithmetic variety   over $\mathrm{Spec}(\Z)$  of absolute dimension $d+1$. We assume that $Y_\Q$  is smooth. Let $\LL$ be a line bundle on $Y$.\\

 A weight $\phi$ on $\LL(\C)$ is a locally integrable function on the complement of the zero-section in the total space of the dual line bundle $\mathcal{L}^{-1}(\C)$ satisfying the log-homogeneity property 
\[
\phi(\lambda v)=\log|\lambda|+\phi(v)
\]
for all non-zero $v\in \mathcal{L}^{-1}(\C) $ and $\lambda\in \C$.  Let $\phi$ be a weight function on $
\LL$. $\phi$ 
defines a Hermitian  metric  on $\LL$, which we denote by $\|\cdot\|_\phi$. We denote by
$\overline{\LL}_\phi$ the line bundle $\LL$ 
endowed with the metric $\|\cdot\|_\phi$.\\

Let $\mu$ be a probability measure on $Y(\C)$. Let 
$\phi$ (resp. $\psi$) be a  continuous weight function on $\LL$ (resp. $\mathcal E$). Let $k$ be a positive integer.
We endow the space of global sections $H^0(Y,{k\LL+\mathcal E})\otimes_\Z\C $ with the $L^2$-norm given as follows
\[
\|s\|_{(\mu, k\phi+\psi)}:=\left(\int_{Y(\C)} \|s(x)\|_{k \phi+\psi}^2\mu\right)^{\frac{1}{2}} \quad \forall \ s\in H^0(Y,{k\LL+\mathcal E}) \otimes_\Z\C.
\]
Let $(\cdot, \cdot)_{(\mu,k \phi+\psi)}$ denote the associated inner product. 
Also we consider the sup-norm defined by
\[
\|s\|_{\sup,  k\phi+\psi}:=\sup_{x\in Y(\C)}\|s(x)\|_{k\phi+\psi}\quad \forall \ s\in H^0(Y,{k\LL+\mathcal E}) \otimes_\Z\C
\]
 Let $X$ be a subvariety of $Y$. We let
\[
\|s\|_{\sup,  (k\phi+\psi)_{|_X} }:=\sup_{x\in X(\C)}\|s(x)\|_{(k\phi+\psi)_{|_X}}\quad \forall \ s\in H^0(X,{(k\LL+\mathcal E)}_{|_X}) \otimes_\Z \C
\]
 where $(k\phi+\psi)_{|_X}$ 
denotes the weight of the restriction of  $\|\cdot\|_{k\phi+\psi}$ 
to $(k\LL+\mathcal E)_{|_X}$.\\

The Bergman distortion function $\rho(\mu,\overline{\LL})$ is by definition the function given at a point $x\in X$ by
%\begin{equation}\label{distortion}
\[
\rho(\mu,\phi)(x):=\sup_{s\in H^0(X,\LL)_\C\setminus\{0\}}\frac{\|s(x)\|_{\phi}^2}{\quad\|s\|_{(\mu, \phi)}^2}.
\]
%\end{equation}
If $\{s_1,\ldots,s_N\}$ is a $(\mu, \phi)$-orthonormal basis of $H^0(X,{\LL})_\C$, where
$N=\dim_\C H^0(X,\LL)_\C$, then it is
 well known that
 \[
 \rho(\mu,\phi)(x)=\sum_{j=1}^N \|s_j(x)\|_{\phi}^2\quad \forall\,x\in X,
 \]
see \cite[p. 357]{BermanBoucksom}.\\

 We say that  $\mu$ has the Bernstein-Markov property with respect to  $\|\cdot\|_{\phi}$ if  for all $\eps>0$ we have
 \[
 \sup_X \rho(\mu,k\phi)^\frac{1}{2}=O(e^{k\eps}).
 \]

\begin{remark}
 If $\mu$ is a smooth positive volume form and $\|\cdot\|_{\phi}$ is a continuous metric on $\LL$ then
 $\mu$ has the Bernstein-Markov property with respect to $\|\cdot\|_{\phi}$  (see \cite[Lemma 3.2]{BermanBoucksom}).\\
 \end{remark}

The following result provides a new characterization of canonical metrics on equivariant line bundles on toric varieties. 
\begin{theorem}\label{BMtoric} Let $Y_\C$ be nonsingular complex projective variety. Let $L$ be an equivariant line bundle on $Y_\C$ generated by its global sections. Let $\|\cdot\|_\phi$ be a toric Hermitian metric on $L$. Then 
\bei
\item $\|\cdot\|_{\phi_\infty}$  has Bernstein-Markov property with respect to
$\mu_\infty$.

\item $\|\cdot\|_\phi$ has 
Bernstein-Markov property with respect to
$\mu_\infty$ if and only if $\|\cdot\|_\phi=\lambda\|\cdot\|_{\phi_\infty}$ where $\lambda$ is a positive constant.
\end{enumerate}
\end{theorem}

\begin{proof} 

\bei
\item 
Let $k\in \N_{\geq 1}$. 
It is clear that $ \|\chi^m\|_{\sup,k\phi_\infty}=1$ and $ \|\chi^m\|_{\mu_\infty,k\phi_\infty}=1$ for every $m\in k\Delta_L\cap M$. Using this, it is not difficult to see that\[
\|s\|_{\sup,k\phi_\infty}\leq \sum_{m\in k\Delta_L\cap M} |a_m| \leq \sqrt{\#( k\Delta_L\cap M)} \|s\|_{\mu,k\phi}\quad \forall s\in H^0(Y,kL)_\C
\]
where the complex coefficients  $a_m$ are   such that $s=\sum_{m\in k\Delta_L\cap M} a_m \chi^m$. So we have proved (i).

\item 
Let us assume that for every $\eps>0$ we have
\[
\|s\|_{\sup,k\phi}\leq C e^{k\eps} \|s\|_{\mu_\infty,k\phi} \quad \forall k\in \N
\] 
for every $s\in H^0(Z,kL)$, where $C$ is a positive constant. \\

For $s=\chi^m$ with $m\in k\Delta_L\cap M$, we get
\begin{equation}\label{kg0}
e^{-k \check{g}(\frac{m}{k}) } \leq C e^{k\eps} e^{k g(0)}.
\end{equation}
where $g(u):=\log \|s_L\|(e^{-u})$ for every $u\in N_\R$ with $s_L$ is an equivariant rational section of $L$ and
$\check{g}$ is the Legendre-Fenchel transform of $g$, see \cite{Burgos2} for the definition of Legendre-Fenchel transform.
   We deduce from \eqref{kg0} the following \[
\check{g}(x)\geq -g(0)-\eps\quad\forall x\in \Delta_L.
\]
Observe that $\check{g}(x)\leq -g(0)$. 
It follows that
\[
\check{g}(x)=-g(0)\quad\forall x\in \Delta_L.
\]
By \cite[Corollary 12.2.1]{convex}, we infer that $g=g_\infty-g(0)$ where $g_\infty(u)=\log\|s_L\|_{\phi_\infty}(e^{-u})$.
\end{enumerate}

\end{proof}

\section{Arithmetic volume  of hypersurfaces in projective toric varieties}
\label{sec3}

For $X$ an irreducible hypersurface of $Y$, the arithmetic
volume of $X$ with respect to ${\overline{\LL}_\phi}_{|_X}$ is denoted  by $\widehat{\mathrm{vol}}_X( \overline{\LL}_\phi)$ or $\widehat{\mathrm{vol}}_X(\LL,  \|\cdot\|_\phi)$. In other words,
\[
\widehat{\mathrm{vol}}_X( \overline{\LL}_\phi):=\limsup_{k\rightarrow \infty} \frac{1}{k^d/d!} \hat{h}^0(\overline{H^0(X,k\LL_{|_X} )}_{(\sup,k\phi_{|_X})}).
\]

Unless otherwise stated  we assume that  $Y$ is a smooth projective toric variety, and  $\LL$ is an equivariant line bundle generated by its global sections on $Y$.
Let $\mathcal E$ be a  line bundle on $Y$ such that the defining equation
of $X$ is given by a global section $s$ of $\mathcal E$.

Note that the following sequence is exact.
\[
0\ra {H^0({{Y}},
k\LL)}\overset{i}\ra {H^0({{Y}},
{k\LL+\mathcal E})} \overset{\pi}\ra {H^0({{X}},(k\LL+\mathcal E)_{|_{{X}}})} \ra 0\ \ (\text{for}\  k=1,2,\ldots)
\] 
where $i$ is the multiplication map by $s$. \\

Let $k\geq 1$. We consider the following  admissible  exact sequences.
\begin{equation}\label{exact}
0\ra\overline{{H^0({{Y}},
k\LL)}}_{(\mu,k\phi, s)} \overset{i}\ra\overline{H^0({{Y}},
{k\LL+\mathcal E})}_{(\mu,k\phi)}\overset{\pi} \ra \overline{H^0({{X}},(k\LL+\mathcal E)_{|_{{X}}})}_{{\mathrm{sq}},(\mu,k\phi)} \ra 0,
\end{equation}
and
\begin{equation}\label{exactsup}
0\ra \overline{{H^0({{Y}},
k\LL)}}_{(\sup,k\phi, s)} \overset{i}\ra \overline{H^0({{Y}},
{k\LL+\mathcal E})}_{(\sup,k\phi)}\overset{\pi} \ra \overline{H^0({{X}},(k\LL+\mathcal E)_{|_{{X}}})}_{{\mathrm{sq}},(\sup,k\phi)} \ra 0,
\end{equation}
where the metrics of $ \overline{H^0({{Y}},k\LL{{{}}})}_{(\sup,k\phi,s)}$ and $\overline{H^0({{X}},(k\LL+\mathcal E)_{|_{{X}}})}_{{\mathrm{sq}},(\sup,k\phi,s)}$ (resp. $ \overline{H^0({{Y}},k\LL{{{}}})}_{(\mu,k\phi,s)}$ and $\overline{H^0({{X}},(k\LL+\mathcal E)_{|_{{X}}})}_{{\mathrm{sq}},(\mu,k\phi,s)}$  ) are induced by the norm considered on $\overline{H^0({{Y}},
{k\LL+\mathcal E})}_{(\sup,k\phi)}$ (resp. $\overline{H^0({{Y}},
{k\LL+\mathcal E})}_{(\mu,k\phi)}$).

\begin{theorem}\label{thm4.3}
Let $\phi$ be a weight on $\LL$. We assume that
$\|\cdot\|_\phi$ is smooth and  positive. Let $\mu$ be a smooth
probability measure on $Y(\C)$. We have
\bei

\item 

\begin{equation}\label{Th2.2i}
\limsup_{k\rightarrow \infty} \frac{{ \hat h^0 }
 \left(\overline{{H^0}({{X}},(k\LL+\mathcal E)_{|_{{X}}} )}_{{\mathrm{sq}}, (\mu, k\phi) } \right) }{k^d/d!}=\limsup_{k\rightarrow \infty} \frac{{ \hat h^0} \left(\overline{{H^0}({{X}},(k\LL+\mathcal E)_{|_{{X}}}) }_{{\mathrm{sq}}, (\sup, k\phi) } \right) }{k^d/d!}.
\end{equation}
\item

\begin{equation}\label{Th2.2ii}
\limsup_{k\rightarrow \infty} \frac{{{{\hat{h}^0}}} \left(\overline{{H^0}({{X}}, k\LL_{|_{{X}}}) }_{{\mathrm{sq}}, (\mu, k\phi)}\right) }{k^d/d!}=
\widehat{\mathrm{vol}}_X(\overline{\LL}_\phi).
\end{equation}

\end{enumerate}

\end{theorem}

\begin{proof}

Let $\phi$ be a weight on $\LL$ such that
the metric $\|\cdot\|_\phi$ is smooth and positive. 
By \cite[Theorem B, (2.7.3)]{Ran}, we know that for every
$\eps>0$ and $k=1,2,\ldots$

\begin{equation}\label{18}
\|s\|_{\mathrm{sq},(\sup,k\phi+\psi)}\leq C  e^{k\eps} \|s\|_{\sup,(k\phi+\psi)_{|_X}},\quad \forall s\in H^0(X,(k\LL+\mathcal E)_{|_X})\otimes_\Z\C,
\end{equation}
where $C$ is a positive constant depending only on 
$\phi$ and $\mu.$

It is clear that
\begin{equation}\label{19}
\|s\|_{\sup,\phi_{|_{X}}}\leq \|s\|_{\mathrm{sq},(\sup,\phi)}\quad \forall s\in H^0(X,(k\LL+\mathcal E)_{|_X})\otimes_\Z\C
\end{equation}

Combining \eqref{18} and \eqref{19}, and following the proof  \cite[Lemma 2.1]{Moriwaki}), it follows immediately that

%\begin{equation}\label{voltheta}
\[
\limsup_{k\rightarrow \infty} \frac{{{{\hat{h}^0}}}(\overline{{H^0}({{X}},(k\LL+\mathcal E)_{|_{{X}}}) })_{ (\sup,(k\phi+\psi)_{|_X}) } )}{k^d/d!}= \limsup_{k\rightarrow \infty} \frac{{{{\hat{h}^0}}}(\overline{{H^0}({{X}},(k\LL+\mathcal E)_{|_{{X}}}) })_{\mathrm{sq}, (\sup,k\phi+\psi) } )}{k^d/d!}.
\]
%\end{equation}

By Gromov's inequality, there exists a constant $C'$ such that  for every $\eps>0$ and $k\in \N$,

\[
 \|s\|_{(\mu,k\phi+\psi)} \leq  \|s\|_{(\sup, k\phi+\psi)} \leq   C' e^{k\eps} \|s\|_{(\mu,k\phi+\psi)} \quad \forall \ s\in H^0(Y,k\LL+\mathcal E)\otimes_\Z\C.
\]
Hence
\[
 \|s\|_{{\mathrm{sq}}, (\mu,k\phi+\psi)} \leq  \|s\|_{{\mathrm{sq}}, (\sup, k\phi+\psi)} \leq   C' e^{k\eps} \|s\|_{{\mathrm{sq}}, (\mu,k\phi+\psi)} 
 \quad \forall \ s\in H^0({{X}},(k\LL+\mathcal E)_{|_{{X}}}).
\] So  we deduce
(i). The proof (ii)  follows from (i).

\end{proof}

Let $Z$ be an arithmetic variety over $\mathrm{Spec}(\Z)$ of dimension $N+1$.
According to \cite{Moriwaki1},  there are three kinds of positivity of $\overline{{\LL}}=({{\LL}},\|\cdot\|)$ a Hermitian  line bundle on $Z$.
\begin{itemize}
\item \textit{ample} : $\overline{{\LL}}$ is ample if ${{L}}$ is ample on ${Z}$, 
the first Chern form $c_1({\overline{\LL}})$ 
is positive on ${Z}(\C)$ and, for a sufficiently large integer $k$, 
$H^0({Z}, k {\overline{\LL}})$ is generated by the 
set 
\[
\{s\in H^0({Z},k{\overline{\LL}})\mid \|s\|_{\sup}<1 \},
\]
as a $\Z$-module.
\item \textit{nef} : ${\overline{\LL}}$ 
is nef if the first Chern form $c_1( {{\overline{\LL}}})$ is semipositive and 
$\widehat{\deg}({\overline{\LL}}_{|_\Gamma})\geq 0$ for any $1$-dimensional closed subscheme $\Gamma$ in ${Z}$.

\item \textit{big} : ${\overline{\LL}}$ is big 
if  ${\overline{\LL}}_\Q$ is big on ${Z}_\Q$  and 
there is a positive integer $k$ and a 
non-zero section $s$ of $H^0({Z},k{\overline{\LL}})$ 
with
$\|s\|_{\sup}<1$.
  
\end{itemize}

In   the notation of \cite{Moriwaki1} we have  
\begin{equation}\label{kN1}
\hat{h}^1(H^0(Z,k\overline{{\LL}}), \|\cdot\|_{\sup}^{k\overline{{\LL}}} )=o(k^{N+1}), \quad (k\rightarrow \infty),
\end{equation}
for every
    ample Hermitian  line bundle  $\overline{{\LL}}$ on $Z$,
see \cite[p. 428]{Moriwaki2}.\\

The following lemma can be regarded as a slight generalization of
\eqref{kN1}.

\begin{lemma}\label{h1} Let $\mu$ be a  smooth positive volume form  on  $Y$ . Let $\LL$ be an equivariant line bundle generated by its global sections on $Y$. 
Let $\|\cdot\|_{\phi}$ be a continuous  Hermitian  metric on $\LL$ such that
$\|\chi^m\|_{\sup,\phi}\leq 1$ for  every $m\in \Delta_\LL\cap M$. 

Let $X$ be an irreducible  hypersurface of $Y$. With the notations of the previous section,
we have
\[
\widehat h^1(\overline{{H^0}({{X}},{(k\LL+\mathcal E)}_{|_{{X}}}) }_{{\mathrm{sq}},(\sup,k\phi+\psi_\infty)})= o(k^d), \quad (k\rightarrow \infty),
\]
and
\[
\widehat h^1(\overline{{H^0}({{X}},{(k\LL+\mathcal E)}_{|_{{X}}}) }_{{\mathrm{sq}},(\mu,k\phi+\psi_\infty)})= o(k^d), \quad (k\rightarrow \infty).
\]

\end{lemma}

\begin{remark}\label{remark3.2}
Lemma \ref{h1} can be applied to 
$\overline{\LL}_{\phi_\infty}$,  endowed with its 
 canonical  metric.
\end{remark}

%%%%%%%%

%%%%%%%

\begin{proof}[Proof of Lemma \ref{h1}]

To shorten notation,  we write
$\|\cdot\|$ and $\|\cdot\|_{{\mathrm{sq}}}$ instead of $\|\cdot\|_{\sup}$ and 
$\|\cdot\|_{_{{\mathrm{sq}},(\sup,k\phi +\psi_\infty)}}$ respectively. The proof
of the second assertion can be deduced from the first one by using
Bernstein-Markov's property.\\

We let $e_m:=\chi^{m}$ for every $m\in \Delta_{k\LL+\mathcal E}$.\\

Let $k\geq 1$.  Let $\gamma \in \widehat H^1(\overline{{H^0}({{X}},(k\LL+\mathcal E)_{|_{{X}}}) }_{{\mathrm{sq}}})$.    We have
\begin{equation}\label{quotient}
|\gamma(\pi(e_m))|\leq \|\pi(e_m)\|_{{\mathrm{sq}}}\leq \|e_m\|\leq  1, \ \forall m\in (k\Delta_{\LL}+\Delta_{\mathcal E})\cap M.\\
\end{equation}

\
Note that \[
\pi^\ast:H^0({{X}},(k\LL+\mathcal E)_{|_{{X}}})^\vee \longrightarrow  H^0(Y, k\LL+\mathcal E)^\vee\]  is injective. We consider $\pi^\ast (\gamma)\in H^0(Y, k\LL+\mathcal E)^\vee$. There exists a sequence of integers $(a_m)_{m\in (k\Delta_\LL+\Delta_{\mathcal E})\cap M}$ such that 
\[
\gamma \circ \pi=\sum_{m \in \N^{N+1}_k}a_m e_m^\vee,
\]
where $\{e_m^\vee\}_{m\in \Delta_{k\LL+\mathcal E}\cap M}$ denote the dual basis of $\{e_m\}_{m\in \Delta_{k\LL+\mathcal E}\cap M}$. 
From \eqref{quotient} we see that
\[
a_{m}\in \{-1,0,1\}, \quad \forall \;m\in \Delta_{k\LL+\mathcal E}\cap M.
\]

\

Let  $f$ be a rational function which defines $X$.
We have
\[
(\gamma\circ\pi)(f {\chi^{\mu}})=0\quad \forall \mu\in \Delta_{k\LL}\cap M.
\]
So
\[
\begin{split}
0=&\sum_{\nu\in \Delta_{k\LL+\mathcal E}\cap M} a_\nu e_\nu^{\vee}(f{x^{\mu}})\\
=&\sum_{\nu\in \Delta_{k\LL+\mathcal E}\cap M} a_\nu \sum_{m\in \Delta_{\mathcal E}} b_m e_\nu^\vee(\chi^{m}\chi^{\mu})\\
=&
\sum_{\nu\in \Delta_{k\LL+\mathcal E}\cap M} a_\nu \sum_{m\in \Delta_{\mathcal E}} b_m e_\nu^\vee(e_{m+\mu}).
\end{split}
\]
Hence
\[
0=\sum_{\nu\in \Delta_{k\LL+\mathcal E}\cap M} a_\nu b_{\nu-\mu},\quad 
\forall \mu \in \Delta_{k\LL}\cap M,
\]
where we have made the convention that  $b_{\nu-\mu}=0$ whenever 
$\nu-\mu\notin  \Delta_{k\LL}\cap M$.\\

Let us consider the matrix 
\[
C_k= (c_{\mu,m})_{\substack{\mu  \in \Delta_{k\LL}\cap M,\\
m\in \Delta_{k\LL+\mathcal E}\cap M }}\]
 where $c_{\mu,m}=b_{m-\mu}$ for  any
$\mu\in \Delta_{k\LL}\cap M$ and $ m\in \Delta_{k\LL+\mathcal E}\cap M$. So  $C_k$  is a
$h^0(Y,k\LL)\times h^0(Y,k\LL+\mathcal E)$-matrix, where its  $\mu$-row   is given in terms of the coefficients of 
$f{\chi^{\mu}}$.\\

We claim that the rank of $C_k$ is $h^0(Y,k\LL)$. Indeed, let $y=(y_{m})_{m\in \Delta_{k\LL}\cap M} \in \R^{h^0(Y,k\LL)}$. 
By basic linear algebra, we observe that
$C_k^t y=0$ (where $C_k^t$ is the transpose of $C_k$) if and only if $f \sum_{m\in \Delta_{k\LL}\cap M} y_m \chi^m=0$.\\ 

It follows that
\[
\dim \ker C_k=h^0(Y,k\LL+\mathcal E)- h^0(Y,k\LL)=o(k^{d}), 
\]
as $k\rightarrow \infty$. \\

Note that  $(a_m)_{m\in  \Delta_{k\LL+\mathcal E}\cap M}\in \ker  C_k$ 
and recall that $a_m\in\{-1,0,1\}$, so  we can conclude that

\[
\# \widehat H^1(\overline{{H^0}({{X}},(k\LL+\mathcal E)_{|_{{X}}}) }_{{\mathrm{sq}}})=3^{o(k^{d})}.
\]

\end{proof}

%%%%%%%

%%%%%%%

\section{Canonical arithmetic volume  of hypersurfaces}\label{section 4}
Assume that $\LL$ is generated by its global sections on $Y$. 
Let $(\phi_p)_{p=1,2,\ldots}$ be  the sequence of continuous weights on $\LL$ given as follows
\[
\|s(x)\|_{\phi_p}=\frac{|s(x)|}{\left(\sum_{v\in \Delta_\LL\cap M} |\chi^v(x)|^p \right)^{\frac{1}{p}}},\quad p=1,2,\ldots
\]
for every local section $s$ of $\LL$.  

It is well-known that the sequence
$(\|\cdot\|_{\phi_p})_{p=1,2,\ldots}$ converges uniformly to
$\|\cdot\|_{\phi_\infty}$.\\

From now on, we assume moreover that the probability measure  $\mu$ is invariant under
the action of the compact torus of $Y(\C)$.

\begin{proposition}\label{limit}

Let $\LL$ be an equivariant line bundle generated by its global sections. 
Under the above notations and assumptions, we have

\begin{equation}\label{1706}
\lim_{p\rightarrow \infty}  \limsup_{k\rightarrow \infty}\frac{{\hat h^0}(\overline{{H^0}({{X}},{k\LL}_{|_{{X}}}) }_{{\mathrm{sq}},(\mu,k{{\phi_p}})} )}{k^d/d!} =\limsup_{k\rightarrow \infty}\frac{{\hat h^0}(\overline{{H^0}({{X}},{k\LL}_{|_{{X}}}) }_{{\mathrm{sq}},(\mu_\infty,k{{\phi_\infty}})} )}{k^d/d!}.
\end{equation}

\end{proposition}

\begin{proof}

%Since $\LL$
There exists an equivariant map

\[
\psi_\LL: Y\lra \p^{r_\LL},\ x\mapsto \psi_\LL(x)=(\chi^m(x))_{m\in \Delta_\LL\cap M}
\]
where $r_\LL:=\# (\Delta_\LL\cap M)-1$.  We have
\[
\|\cdot\|_{\overline{\LL}_{\phi_\infty}}=\psi_\LL^\ast \|\cdot\|_{\overline{\mathcal O(1)}_{\phi_\infty}}.
\]

Let $\delta\in [0,1]$.  Let $v_0\in \Delta_\LL\cap M$. For every $v\in \Delta_\LL\cap M$, we let
\[
E_{v,\delta}:=\left\{x\in Y(\C): \chi^{v_0}(x)\neq 0\; 
,\;  \delta \tfrac{|\chi^v(x)|}{|\chi^{v_0}(x)|}\leq \tfrac{|\chi^{v'}(x)|}{|\chi^{v_0}(x)|}\leq 
\tfrac{|\chi^v(x)|}{|\chi^{v_0}(x)|}\;\text{for every} \; v' \in (\Delta_\LL\cap M) \right\}.
\]

It is clear that
\[
Y(\C)\setminus \mathrm{div}(\chi^{v_0})=\bigcup_{v\in \Delta_\LL\cap M} E_{v,0}.
\]

For $0<\delta<1$, and for every $p=1,2,\ldots$, $k=1,2,\ldots$, and
$m\in k \Delta_\LL\cap M$,
\[
\begin{split}
( \chi^m,\chi^m)_{(\mu,k \phi_p)}
\geq  &\sum_{v\in \Delta_\LL\cap M} \int_{E_{v,\delta}} \frac{|\chi^m(x)|^2}{ (\sum_{v\in \Delta_\LL\cap M }|\chi^v(x)|^p)^{\frac{2k}{p}}}\mu\geq \frac{\delta^{2k}}{(r_\LL+1)^{\frac{2k}{p}}} I_\delta, \end{split}
\]
where we have put $I_\delta:=\sum_{v\in \Delta_\LL\cap M} \int_{E_{v,\delta}} \mu.$ 

That is
\begin{equation}\label{1011}
( \chi^m,\chi^m)_{(\mu, k {{\phi_p}}) }\geq  \frac{\delta^{2k}}{(r_\LL+1)^{\frac{2k}{p}}} I_\delta.
\end{equation}

%%% 

On one hand,  by noticing that the metrics are invariant under the action of
the compact group $\mathcal{S}$, it is easy to check  that \eqref{1011} gives the following 
\begin{equation}\label{inequality1}
( s,s)_{(\mu, k {{\phi_p}}) }\geq  \frac{\delta^{2k} I_\delta}{(r_\LL+1)^{\frac{2k}{p}}}  ( s,s)_{(\mu_\infty, k {{\phi_\infty}}) }, \quad \forall\  s\in H^0(Y,k\LL)\otimes_\Z \C.
\end{equation}
On the other hand, we have
\begin{equation}\label{inequality2}
(s, s)_{(\mu, k\phi_p)}\leq 
(s, s)_{(\mu_\infty, k\phi_\infty)}, \quad \forall\  s\in H^0(Y,k\LL)\otimes_\Z \C.
\end{equation}
In order to see this, let
$s=\sum_{m  \in k\Delta_\LL\cap M} c_m \chi^m$  be an element of  $H^0(Y,k\LL)\otimes_\Z \C$. By the  invariance of the metrics, we obtain that
\[
\begin{split}
(s, s)_{(\mu,k\phi_p)}=&\sum_{m\in k\Delta_\LL\cap M} |c_m|^2 \int_{Y(\C)}  
\|\chi^m\|_{k\phi_p}^2\mu\\
&\leq \sum_{m\in k\Delta_\LL\cap M} |c_m|^2\\
=& (s, s)_{(\mu_\infty,k\phi_\infty)},
\end{split}
\]
where we have used the fact that  $\|\chi^m\|_{k\phi_p }\leq\|\chi^m\|_{k\phi_\infty }\leq 1$.\\

So, we have proved the following.
\[
\frac{\delta^{2k} I_\delta}{(r_\LL+1)^{\frac{2k}{p}}}  ( s,s)_{(\mu_\infty, k {{\phi_\infty}}) }\leq (s, s)_{(\mu,k\phi_p)}\leq  (s, s)_{(\mu_\infty,k\phi_\infty)}\quad\forall\ 
s\in H^0(Y,k\LL)\otimes_\Z \C.
\]

That is
\[
 \frac{\delta^{k} I_\delta^{\frac{1}{2}}}{(r_\LL+1)^{\frac{k}{p}}} \|\cdot \|_{{\mathrm{sq}}, (\mu_\infty, k {{\phi_\infty}})}\leq \|\cdot \|_{{\mathrm{sq}}, (\mu, k {{\phi_p}})}\leq   
 \|\cdot \|_{{\mathrm{sq}}, (\mu_\infty, k {{\phi_\infty}})} \quad \forall \ k\in \N.\]

From these inequalities, we infer that
\[
\begin{split}
\limsup_{k\rightarrow \infty}\frac{{\hat h^0}(\overline{{H^0}({{X}},
k\LL_{|_{{X}}}) )}_{{\mathrm{sq}},(\mu_\infty,k{{\phi_\infty}})}}{k^d/d!} &\leq \limsup_{k\rightarrow \infty}\frac{{\hat h^0}(\overline{{H^0}({{X}},k\LL_{|_{{X}}})) }_{{\mathrm{sq}},(\mu,k{{\phi_p}})}}{k^d/d!} \\
&\leq 
\limsup_{k\rightarrow \infty}\frac{{\hat h^0}(\overline{{H^0}({{X}},k\LL_{|_{{X}}}) )}_{{\mathrm{sq}},(\mu_\infty,k{{\phi_p}})}}{k^d/d!}\\
&-\log \frac{\delta^2}{(r_\LL+1)^{\frac{2}{p}}}.
\end{split}
\]

By letting $\delta\rightarrow 1^-$, we obtain
\begin{equation}\label{1111}
\left|  \limsup_{k\rightarrow \infty}\frac{{\hat h^0}(\overline{{H^0}({{X}},k\LL_{|_{{X}}}) }_{{\mathrm{sq}},(\mu_\infty,k{{\phi_\infty}})}}{k^d/d!} -\limsup_{k\rightarrow \infty}\frac{{\hat h^0}(\overline{{H^0}({{X}},k\LL_{|_{{X}}}) }_{{\mathrm{sq}},(\mu,k{{\phi_p}})}}{k^d/d!}\right| \leq  
\tfrac{2}{p}\log (r_\LL+1).
\end{equation}

\end{proof}

%%%%

\begin{lemma}\label{Gamma}
\[
\lim_{k\rightarrow \infty} \frac{1}{k^d} \left(\hat \chi(\overline{\Z}^{\#(\Delta_{k\LL+\mathcal E}\cap M)})- \hat \chi(\overline{\Z}^{\# (k\Delta_\LL\cap M)})\right)=0.
\]
\end{lemma}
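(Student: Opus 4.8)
The plan is to reduce everything to the closed-form expression for $\hat\chi(\overline{\Z}^r)$ given in the preliminaries, namely $\hat\chi(\overline{\Z}^r)=\log\!\big(\Gamma(\tfrac r2+1)\,\pi^{-r/2}\big)$. Writing $r_k=\binom{k+N}{k}$ and $r'_k=\binom{k-d+N}{k-d}$, the quantity to estimate is
\[
\hat\chi(\overline{\Z}^{r_k})-\hat\chi(\overline{\Z}^{r'_k})
=\log\Gamma\!\Big(\tfrac{r_k}{2}+1\Big)-\log\Gamma\!\Big(\tfrac{r'_k}{2}+1\Big)-\tfrac{r_k-r'_k}{2}\log\pi .
\]
First I would record the elementary growth facts: $r_k\sim k^N/N!$, $r'_k\sim k^N/N!$, and crucially $r_k-r'_k=O(k^{N-1})$ (the top-degree terms of the two binomial polynomials in $k$ cancel). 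Hence $\tfrac{r_k-r'_k}{2}\log\pi=o(k^N)$ trivially, so that term contributes nothing after dividing by $k^N$.

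The core of the argument is to control $\log\Gamma(\tfrac{r_k}{2}+1)-\log\Gamma(\tfrac{r'_k}{2}+1)$. Here I would invoke Stirling's formula in the form $\log\Gamma(x+1)=x\log x-x+O(\log x)$ as $x\to\infty$. Applying it to both terms with $x=r_k/2$ and $x=r'_k/2$, the difference becomes
\[
\tfrac{r_k}{2}\log\tfrac{r_k}{2}-\tfrac{r_k}{2}-\Big(\tfrac{r'_k}{2}\log\tfrac{r'_k}{2}-\tfrac{r'_k}{2}\Big)+O(\log r_k).
\]
The $O(\log r_k)=O(\log k)$ term is obviously $o(k^N)$. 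For the main part, write $g(x)=x\log x-x$; by the mean value theorem $g(\tfrac{r_k}{2})-g(\tfrac{r'_k}{2})=g'(\xi_k)\cdot\tfrac{r_k-r'_k}{2}=\log\xi_k\cdot\tfrac{r_k-r'_k}{2}$ for some $\xi_k$ between $r'_k/2$ and $r_k/2$. Since $\log\xi_k=O(\log k)$ and $r_k-r'_k=O(k^{N-1})$, this product is $O(k^{N-1}\log k)=o(k^N)$. Dividing the whole expression by $k^N$ and letting $k\to\infty$ gives the claim.

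The only mild subtlety — and the one place to be slightly careful rather than routine — is confirming that the leading $k^N/N!$ coefficients of $\binom{k+N}{k}$ and $\binom{k-d+N}{k-d}$ genuinely agree, so that $r_k-r'_k$ is indeed one order lower; this is a short polynomial-degree computation. Everything else is a direct application of Stirling's estimate. If one prefers to avoid the mean value theorem, an equally clean alternative is to bound $0\le\log\Gamma(\tfrac{r_k}{2}+1)-\log\Gamma(\tfrac{r'_k}{2}+1)\le(r_k-r'_k)\cdot\tfrac12\log\tfrac{r_k}{2}$ using monotonicity of $\log\Gamma$ and of its increments, which again yields $O(k^{N-1}\log k)$.
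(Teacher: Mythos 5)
Your argument is correct and is exactly the route the paper intends: the paper's proof consists of the single line ``This is a consequence of Stirling's asymptotic formula,'' and your write-up supplies precisely the missing details (the cancellation of the leading $k^N/N!$ terms of the two binomial coefficients, then Stirling plus the mean value theorem to get an $O(k^{N-1}\log k)$ bound). The only nitpick is that the paper defines $\hat\chi(\overline{\Z}^r)=-\log\bigl(\Gamma(\tfrac r2+1)\pi^{-r/2}\bigr)$ with a leading minus sign that you dropped, but since the statement concerns a difference tending to zero this has no effect on the argument.
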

\begin{proof} 

This is a consequence of Stirling's asymptotic formula.
\end{proof}

%%%%%%%

\begin{proposition}\label{4.3}

\begin{equation}\label{4.3-1}
\limsup_{k\rightarrow \infty} \frac{{{{\hat{h}^0}}} \left(\overline{{H^0}({{X}},(k\LL+\mathcal E)_{|_{{X}}}) }_{{\mathrm{sq}}, (\mu_\infty, k\phi_\infty+\psi_\infty)}\right) }{k^d/d!}=\limsup_{k\rightarrow \infty} \frac{{{{\hat{h}^0}}} \left(\overline{{H^0}({{X}},k\LL_{|_{{X}}}) }_{{\mathrm{sq}}, (\mu_\infty, k\phi_\infty)}\right) }{k^d/d!}.
\end{equation}

\end{proposition}

\begin{proof}

Let $v$ be a global section of $\mathcal E$ which does not
vanish on the compact torus $\mathcal S$ of $Y$. We denote by $V$ the hypersurface defined by
$v$.   We can show  that the following sequence is exact. 
\[
0\longrightarrow {H^0({{X}},
k\LL_{|_X})}\overset{i_V}\longrightarrow {H^0({{X}},
{(k\LL+\mathcal E)_{|_X}})} \overset{\pi_V}\longrightarrow {H^0({{V}},(k\LL+\mathcal E)_{|_{{V}}})} \longrightarrow 0,
\] 
where $i_V$ is the multiplication map by $v$ and
$\pi_V$ is the natural projection map.

Let us consider the following admissible metrized exact sequence

\begin{equation}\label{XV}
\begin{split}
0\longrightarrow \overline{H^0({{X}},
k\LL_{|_X})}_{\mathrm{sq}, (\mu_\infty,k\phi_\infty, v)}\overset{i_V}\longrightarrow & \overline{H^0({{X}},
{(k\LL+\mathcal E)_{|_X}})}_{\mathrm{sq}, (\mu_\infty,k\phi_\infty+\psi_\infty)} \\
& \overset{\pi_V} \longrightarrow \overline{H^0({{V}},(k\LL+\mathcal E)_{|_{{V}}})}_{sq, (\mathrm{sq}, (\mu_\infty,k\phi_\infty+\psi_\infty))} \longrightarrow 0,
\end{split}
\end{equation}

On one hand, 
there exists a positive constant $c$ such that for every
$t\in H^0(Y, k\LL)\otimes_\Z\C$ we have
\[
\|vt\|_{\mu_\infty, k\phi_\infty+\psi_\infty}^2=\int_{\mathcal S} 
\|v(x)\|_{\psi}^2 \| t(x)\|^2_{k\phi}\mu_\infty\geq c 
%\int_{\mathcal S}  \| t(x)\|^2_{k\phi_\infty}\mu_\infty
\|t\|_{\mu_\infty,k\phi_\infty}^2.
\] 
On the other hand, 
\[
\|vt\|_{\mu_\infty, k\phi_\infty+\psi_\infty}^2 \leq % \|v\|_{\sup,\psi}^2 \int_{\mathcal S}  \| t(x)\|^2_{k\phi_\infty}\mu_\infty=
\|v\|_{\sup,\psi_\infty}^2 \|t\|_{\mu_\infty,k\phi_\infty}^2,
\]
for every $t\in H^0(Y, k\LL)_\C$. \\

This leads to the following
\[
c 
\|t\|_{\mathrm{sq},(\mu_\infty,k\phi_\infty)}^2
\leq \|vt\|_{\mathrm{sq},(\mu_\infty, k\phi_\infty+\psi_\infty)}^2 \leq \|v\|_{\sup,\psi}^2 \|t\|_{\mathrm{sq},(\mu_\infty,k\phi_\infty)}^2\quad \forall t \in H^0(Y, k\LL)\otimes_\Z\C.
\]

An easy adaptation of the proof of  \cite[Theorem 4.1]{HajliTAMS} can be used to deduce that % too fast
\[ 
\limsup_{k\rightarrow \infty}\frac{ \hat{h}^0\left(\overline{{H^0}({{X}},k\LL_{|_X} )}_{\mathrm{sq}, (\mu_\infty,k\phi_\infty, {}{v})} \right)}{k^d/d!}=
\limsup_{k\rightarrow \infty}\frac{ \hat{h}^0(\overline{{H^0}({{X}},k\LL_{|_X} )}_{\mathrm{sq}, (\mu_\infty,k\phi_\infty)} )}{k^d/d!}.
\]

From \eqref{XV} and using \cite[(3.3.2), (3.3.3) p. 59 ]{BostTheta} and by \cite[Theorem 4.1]{HajliTAMS}, we infer  that
\[
\limsup_{k\rightarrow \infty} \frac{{{{\hat{h}^0}}} \left(\overline{{H^0}({{X}},(k\LL+\mathcal E)_{|_{{X}}}) }_{{\mathrm{sq}}, (\mu_\infty, k\phi_\infty+\psi_\infty)}\right) }{k^d/d!}= \limsup_{k\rightarrow \infty}\frac{ \hat{h}^0(\overline{{H^0}({{X}},k\LL_{|_X} ))}_{\mathrm{sq}, (\mu_\infty,k\phi_\infty, {}{v})} )}{k^d/d!}.
\]

This concludes the proof of the proposition.

\end{proof}

\begin{theorem}\label{maintheorem} 
We have
\item 
\begin{enumerate}[label=\roman*)]
\item 

\[
\lim_{k\rightarrow \infty} \frac{{{{\hat{h}^0}}} \left(\overline{{H^0}({{X}},(k\LL+\mathcal E)_{|_{{X}}}) }_{{\mathrm{sq}}, (\mu_\infty, k\phi_\infty+\psi_\infty)}\right) }{k^d/d!}=
h_{ \overline{ \LL}_{\phi_\infty}}({{X}}),
\]

\item
\[
\widehat{\mathrm{vol}}_X({\overline{ \LL}_{\phi_\infty}})=h_{ \overline{ \LL}_{\phi_\infty}}({{X}}).
\]

\end{enumerate}
\end{theorem}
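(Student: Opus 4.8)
The plan is to prove part i) first and then derive part ii) from it by combining the earlier results of the paper. For part i), the idea is to compute the limit of $\hat{h}^0$ of the canonical square lattice directly, using the fact that the canonical lattice $\overline{H^0(\p^N,\mathcal{O}(k))}_{(\mu_\infty,k\phi_\infty)}$ is, by the invariance of $\mu_\infty$ under the compact torus $\mathcal{S}$, \emph{orthonormal} with respect to the monomial basis $\{x^a : a\in\N_k^{N+1}\}$ (this is exactly the computation $\|x^a\|_{k\phi_p}\leq\|x^a\|_{k\phi_\infty}\leq 1$ carried to the limit, with equality for $\phi_\infty$), so that $\hat{\chi}$ of this lattice is $0$ as stated in the introduction. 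First I would use the identity $\hat{h}^0 = \widehat{\deg} + \hat{h}^1$ from \eqref{h0h1} applied to $\overline{H^0(X,\mathcal{O}(k)_{|_X})}_{\mathrm{sq},(\mu_\infty,k\phi_\infty)}$, and invoke Lemma \ref{h1} to discard the $\hat{h}^1$ term as $o(k^N)$. So the problem reduces to computing $\lim_k \widehat{\deg}(\overline{H^0(X,\mathcal{O}(k)_{|_X})}_{\mathrm{sq},(\mu_\infty,k\phi_\infty)})/(k^N/N!)$.

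Next I would exploit the admissible exact sequence \eqref{exact} (with $\mu=\mu_\infty$, $\phi=\phi_\infty$): since arithmetic degree is additive on admissible short exact sequences of Euclidean lattices, we get
\[
\widehat{\deg}\,\overline{H^0(X,\mathcal{O}(k)_{|_X})}_{\mathrm{sq}} = \widehat{\deg}\,\overline{H^0(\p^N,\mathcal{O}(k))}_{(\mu_\infty,k\phi_\infty)} - \widehat{\deg}\,\overline{H^0(\p^N,\mathcal{I}\mathcal{O}(k))}_{(\mu_\infty,k\phi_\infty)}.
\]
The first term is handled by $\hat{\chi}=0$ together with Lemma \ref{Gamma} (comparing $\hat{\chi}$ and $\widehat{\deg}$ costs only $\hat{\chi}(\overline{\Z}^{\binom{k+N}{k}})$, which is $o(k^N)$ after dividing by $k^N$). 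For the second term, the submodule $H^0(\p^N,\mathcal{I}\mathcal{O}(k)) = f\cdot H^0(\p^N,\mathcal{O}(k-d))$ is spanned over $\Z$ by $\{f x^\mu : \mu\in\N_{k-d}^{N+1}\}$; computing $\widehat{\deg}$ via the formula $\widehat{\deg}(\overline M) = \log\#(M/\text{spanning set}) - \log\sqrt{\det((s_i,s_j))}$, the index term is $o(k^N)$ (it is controlled by the content/discriminant of $f$ to a bounded power in each of $\sim k^N/N!$ coordinates — this needs a short argument that the quotient is not too large), and the Gram determinant of $\{fx^\mu\}$ with respect to the $(\mu_\infty,k\phi_\infty)$-inner product is precisely the Toeplitz-type determinant whose asymptotics are governed by the Szeg\H{o}--Deninger limit theorem: $-\tfrac{1}{2}\log\det \to N!\int \log|f|^{-1}\,d(\text{something})$, i.e. the Mahler measure $m(f)$. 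Putting these together yields $\lim_k \widehat{\deg}(\cdots)/(k^N/N!) = m(f)$, and the final step is to identify $m(f)$ with $h_{\overline{\mathcal{O}(1)}_{\phi_\infty}}(X)$, which is the standard formula for the canonical height of a hypersurface as the Mahler measure of its defining equation.

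For part ii), I would chain together: Proposition \ref{limit} rewrites $\lim_p\limsup_k \hat{h}^0(\cdots_{\mathrm{sq},(\mu,k\phi_p)})/(k^N/N!)$ as the canonical quantity in part i); Theorem \ref{thm4.3}(ii) (applicable since each $\phi_p$ is smooth and positive) identifies $\lim_k \hat{h}^0(\cdots_{\mathrm{sq},(\mu,k\phi_p)})/(k^N/N!)$ with $\widehat{\mathrm{vol}}_X(\overline{\mathcal{O}(1)}_{\phi_p})$; and continuity of the arithmetic volume in the metric (uniform convergence $\phi_p\to\phi_\infty$) gives $\lim_p \widehat{\mathrm{vol}}_X(\overline{\mathcal{O}(1)}_{\phi_p}) = \widehat{\mathrm{vol}}_X(\overline{\mathcal{O}(1)}_{\phi_\infty})$. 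Combining with part i) and the identification of the canonical height with $m(f)$ closes the proof.

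\textbf{Main obstacle.} The crux is the asymptotics of the Gram determinant of $\{fx^\mu\}_{\mu\in\N_{k-d}^{N+1}}$ in the $(\mu_\infty,k\phi_\infty)$-inner product and its identification with the Mahler measure $m(f)$. This is a multivariate Szeg\H{o}-type limit theorem (the Deninger generalization), and one must check carefully that the degenerate, non-smooth canonical metric $\phi_\infty$ — whose associated measure is the Haar measure on the torus, supported on a lower-dimensional set — does not spoil the limit; this is precisely where the hypothesis that $f$ is irreducible (hence $\log|f|$ is $\mu_\infty$-integrable, with no zero divisors on the torus causing divergence) enters. Controlling the lattice-index error term $\log\#(M/\langle fx^\mu\rangle_\Z) = o(k^N)$ is a secondary technical point requiring a bound on the denominators introduced by dividing by $f$.
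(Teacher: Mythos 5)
Your proposal is correct and follows essentially the same route as the paper: part i) via $\hat{h}^0=\widehat{\deg}+\hat{h}^1$, Lemma \ref{h1}, additivity of $\widehat{\deg}$ on the admissible sequence \eqref{exact}, the vanishing of $\hat{\chi}$ for the canonical lattice, Lemma \ref{Gamma}, and the Szeg\H{o}--Deninger limit for the Gram determinant of $\{fx^\mu\}$; part ii) via Theorem \ref{thm4.3}, Proposition \ref{limit}, and continuity of the volume in $p$. The only cosmetic difference is the order of the two halves, and your worry about the lattice index $\log\#\bigl(H^0(\p^N,\mathcal{I}\mathcal{O}(k))/\langle fx^\mu\rangle_\Z\bigr)$ is moot since $\{fx^\mu\}$ is already a $\Z$-basis of that module.
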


\begin{remark}
Chen \cite{Chen1} proved that the limsup in the definition of arithmetic volume is in fact a limit. 
\end{remark}

\begin{proof}

%%%%%%%%

From Theorem \ref{thm4.3}, we get for every $p=1,2,\ldots$
\[
\begin{split}
\limsup_{k\rightarrow \infty} \frac{{ \hat h^0 }(\overline{{H^0}({{X}},(k\LL+ \mathcal{E})_{|_{{X}}}) }_{{\mathrm{sq}}, (\mu, k{{\phi_p}}) }) }{k^d/d!}=&\limsup_{k\rightarrow \infty} \frac{{\hat h^0}(\overline{{H^0}({{X}},(k\LL+ \mathcal{E})_{|_{{X}}}) }_{{\mathrm{sq}}, (\sup, k{{\phi_p}}) }) }{k^d/d!}\\
=&\widehat{\mathrm{vol}}_X(\overline{\LL}_{\phi_p}).
\end{split}
\]

%%%%%
We have 
\[
\begin{split}
\lim_{p\ra \infty}\widehat{\mathrm{vol}}_X(\overline{\LL}_{\phi_p})=&
\lim_{p\rightarrow \infty}  \limsup_{k\rightarrow \infty}\frac{{\hat h^0}(\overline{{H^0}({{X}},{k\LL}_{|_{{X}}}) }_{{\mathrm{sq}},(\mu,k{{\phi_p}})} )}{k^d/d!} \quad \text{(by \eqref{Th2.2ii})}\\
 =&\limsup_{k\rightarrow \infty}\frac{{\hat h^0}(\overline{{H^0}({{X}},{k\LL}_{|_{{X}}}) }_{{\mathrm{sq}},(\mu_\infty,k{{\phi_\infty}})} )}{k^d/d!} \quad \text{(by  \eqref{1706})}\\
=&\limsup_{k\rightarrow \infty} \frac{{{{\hat{h}^0}}} \left(\overline{{H^0}({{X}},(k\LL+\mathcal E)_{|_{{X}}}) }_{{\mathrm{sq}}, (\mu_\infty, k\phi_\infty+\psi_\infty)}\right) }{k^d/d!}\quad\text{(by \eqref{4.3-1})}\\
\end{split}
\]

Hence
\begin{equation}\label{1606}
\limsup_{k\rightarrow \infty}\frac{{\hat h^0}\Bigl(\overline{{H^0}({{X}}, (k\LL+\mathcal E)_{|_{{X}}}) }_{{\mathrm{sq}},(\mu_\infty,k{{\phi_\infty}+\psi_\infty })}\Bigr) }{k^d/d!}=\widehat{\mathrm{vol}}_{{X}}( \overline{\LL}_{\phi_\infty}),
\end{equation}
where we have used that $\lim_{p\rightarrow \infty} \widehat{\mathrm{vol}}_{{X}}( \overline{\LL}_{\phi_p})=\widehat{\mathrm{vol}}_{{X}}( \overline{\LL}_{\phi_\infty}).$\\

Note that
\begin{equation}\label{chiI}
\hat{\chi}(\overline{{H^0}({{Y}},k\LL)}_{(\mu_\infty,k\phi_\infty, {}{s})} )=\det \left(\left<{{s}} {\chi^m},{{s}} \chi^{m'}  \right>_{(\mu_\infty,k\phi_\infty)}  \right)_{m,m'\in {}{k\Delta_\LL\cap M}},
\end{equation}
(we recall that $\left<\cdot,\cdot\right>_{(\mu_\infty,k\phi_\infty)}$ is the  scalar product
associated with $\|\cdot\|_{(\mu_\infty,k \phi_\infty)}$).\\

We have

\begin{equation}\label{deninger}
\lim_{k\rightarrow \infty } \log \left(  \det \Bigl( \int_{\mathbb{S}_N} \chi^{m} \overline{\chi^{m'}} |s|^2 {d\mu_{\infty}} \Bigr)_{m,m'\in  (k\Delta_\LL)\cap M}\right)^{1/|k\Delta_\LL\cap M|} =\mathrm{vol}(\LL_\Q) \int_{\mathcal{S}}\log |s|^2 {d\mu_{\infty}},
\end{equation}
see  \cite[Theorem 4, p.49]{Deninger_Mahler}. \\

We obtain that
\[
\lim_{k\rightarrow \infty}\frac{ \hat{\chi}(\overline{{H^0}({{Y}},k\LL ))}_{(\mu_\infty,k\phi_\infty, {}{s})} )}{k^d/d!}=\mathrm{vol}(\LL_\Q) \int_{\mathcal{S}}\log |s|^2 {d\mu_{\infty}}.
\]

Applying  \cite[p. 81]{Ran} to \eqref{exact}, we obtain  that
\[
\begin{split}
\widehat{\deg}(\overline{{H^0}({{X}},(k\LL+ \mathcal{E})_{|_{{X}}}) })
_{{\mathrm{sq}},(\mu_\infty,k\phi_\infty+\psi_\infty)}=&
\widehat{\deg}(\overline{{H^0}({{Y}},(k\LL+ \mathcal{E}))} )_{(\mu_\infty,k\phi_\infty+\psi_\infty)}\\
&-\widehat{\deg}(\overline{{H^0}({{Y}}, k\LL))})_{(\mu_\infty,k\phi_\infty, {}{s})},
\end{split}
\]
holds for every $k\in\N$.\\

An easy computation shows that
 \[
\hat{\chi}\left( \overline{{H^0}\left({{Y}},(k\LL+ \mathcal{E}) \right)}_{(\mu_\infty,k\phi_\infty+\psi_\infty)} \right)=0\quad \forall \ k\in \N.
\]

Hence, for all $k\in \N$,
\[
\begin{split}
\widehat{\deg}(\overline{{H^0}({{X}},(k\LL+ \mathcal{E})_{|_{{X}}}) })
_{{\mathrm{sq}},(\mu_\infty,k\phi_\infty+\psi_\infty)}=&
\hat{\chi}(\overline{{H^0}({{Y}}, k\LL+ \mathcal E)} _{(\mu_\infty,k\phi_\infty+\psi_\infty)})-\hat{\chi}(\overline{{H^0}({{Y}},k\LL)}_{(\mu_\infty,k\phi_\infty,s)})\\
&-\hat{\chi}(\overline{\Z}^{h^0(Y,(k\LL+ \mathcal{E}))})+\hat{\chi}(\overline{\Z}^{h^0(Y, k\LL)})\\
=& \det \left(\left<s {\chi^m},s \chi^{m'}  \right> _{(\mu_\infty,k\phi_\infty)} \right)_{m,m'\in  k\Delta_\LL\cap M}\\
&-\hat \chi(\overline{\Z}^{\#(\Delta_{k\LL+\mathcal E}\cap M)}) + \hat \chi(\overline{\Z}^{\# (k\Delta_\LL\cap M)}).
\end{split}
\]

 Since $\dim {H^0}\left({{Y}}, k\LL+\mathcal E \right)\otimes_\Z\Q=O(k^d)$ as $k\rightarrow \infty$, we can use Lemma \ref{Gamma} to conclude that
\begin{equation}\label{mahler06}
\lim_{k\rightarrow \infty}\frac{\widehat{\deg}(\overline{{H^0}({{X}},(k\LL+ \mathcal{E})_{|_{{X}}}) }_{ \mathrm{sq}, (\mu_\infty,k\phi_\infty+\psi_\infty)}  )}{k^d/d!}=\mathrm{vol}(\LL_\Q)\int_{\mathcal{S}} \log |s|^2
{d\mu_{\infty}}.
\end{equation}

It is clear that the metric $\|\cdot\|_{\phi_\infty}$ satisfies the conditions of
Lemma \ref{h1}. Using \eqref{h0h1}, we get
\[
\lim_{k\rightarrow \infty}\frac{\widehat{\deg}(\overline{{H^0}({{X}},(k\LL+ \mathcal{E})_{|_{{X}}}) }_{ \mathrm{sq}, (\mu_\infty,k\phi_\infty+\psi_\infty)}  )}{k^d/d!}= \limsup_{k\rightarrow \infty}\frac{{\hat h^0}(\overline{{H^0}({{X}},(k\LL+ \mathcal{E})_{|_{{X}}}) }_{{\mathrm{sq}},(\mu_\infty,k{{\phi_\infty+\psi_\infty}})}}{k^d/d!}.
\]
So, by \eqref{1606} and \eqref{mahler06},
\[
\mathrm{vol}(\LL_\Q)\int_{\mathcal{S}} \log |s|^2
{d\mu_{\infty}}=\widehat{\mathrm{vol}}_{{X}}( \overline{\LL}_{\phi_\infty}).
\]
We conclude that
\[
\widehat{\mathrm{vol}}_{{X}}( \overline{\LL}_{\phi_\infty})=h_{\overline{\LL}_{\phi_\infty}}(X),
\]
where we have used the fact that 
\[h_{\overline{\LL}_{\phi_\infty}}(X)=\mathrm{vol}(\LL_\Q)\int_{\mathcal{S}} \log |s|^2
{d\mu_{\infty}},\] see \cite[Proposition 7.2.1]{Maillot}.
\end{proof}

 %%%%%%

 \section{A generalized Hodge index theorem on hypersurfaces in toric varieties}

We introduced the theory of arithmetic theta invariants  associated with Hermitian line bundles on arithmetic varieties, see \cite[Section 4]{HajliTAMS}.  One of the main result of \cite{HajliTAMS} is 
 a generalized Hodge index theorem on toric varieties. In this section we show that the methods of \cite{HajliTAMS} can be generalized to prove a generalized Hodge index theorem on hypersurfaces in toric varieties.

\begin{theorem}\label{main} Let $Z$ be an arithmetic variety of  dimension $n+1$ and with smooth generic fibre.
Let $(\LL, \|\cdot\|_\phi)$ and 
$(\LL, \|\cdot\|_\psi)$ be two $\mathcal{C}^\infty$ semipositive  Hermitian  line bundles on $Z$. We assume $\psi\leq \phi$ and  $(\LL, \|\cdot\|_\psi)$ is generated by small sections. We have 
\[
\widehat{\mathrm{vol}}(\LL, \|\cdot\|_\phi)-\widehat{\deg}(\hat c_1( \LL, \|\cdot\|_{\phi})^{n+1})
=
\widehat{\mathrm{vol}}(\LL, \|\cdot\|_\psi)-\widehat{\deg}(\hat c_1(\LL, \|\cdot\|_{\psi })^{n+1}).
\]

\end{theorem}
\begin{proof}
See \cite[Theorem 4.11]{HajliTAMS}.
\end{proof}

\begin{theorem}\label{ThmVolDeg} Let $Y$ be a smooth toric variety over $\Z$ of dimension $d+1$. Let $\LL$ be an equivariant line bundle on $Y$. We assume that  it admits a semipositive weight  $\phi$ and  that  $\overline{\LL}_\phi$ is generated by small sections. Let
$X$ be a hypersurface in $Y$.  Then
\[
\widehat{\mathrm{vol}}_X({\overline{ \LL}_{\phi}})=h_{ \overline{ \LL}_{\phi}}({{X}}).
\]
\
\end{theorem}

\begin{proof} The arithmetic volume function is continuous with respect to the variation of the metric, see \cite[p. 513]{Moriwaki}. Then 
we can assume that $\overline{\LL}_\phi$ is generated by strictly small sections. We claim that the proof  can be  obtained by the same method as in the  proof of \cite[Theorem 5.4]{HajliTAMS}. Indeed, 
there exists $0<\alpha\leq 1$ such that
\[
\alpha \|\cdot\|_\phi\leq \|\cdot\|_{\phi_\infty}. %\leq \|\cdot\|_p
\]

So by Theorem \ref{main} and 
 a continuity argument, we can  show that

\[
\widehat{\mathrm{vol}}_X(\LL, \alpha\|\cdot\|_\phi)-\widehat{\deg}(\hat c_1(\LL_{|_X}, \alpha \|\cdot\|_{\psi })^{d}) =
\widehat{\mathrm{vol}}_X(\LL, \|\cdot\|_{\phi_\infty})-\widehat{\deg}(\hat c_1(\LL_{|_X}, \|\cdot\|_{\phi_\infty})^{d}),
\]
and
\[
\widehat{\mathrm{vol}}_X(\LL, \alpha\|\cdot\|_\phi)-\widehat{\deg}(\hat c_1(\LL_{|_X}, \alpha \|\cdot\|_{\phi })^{d})=
\widehat{\mathrm{vol}}_X(\LL, \|\cdot\|_\phi)-\widehat{\deg}(\hat c_1(\LL_{|_X}, \|\cdot\|_{\phi })^{d}).
\]

Since \[
\widehat{\mathrm{vol}}(\overline{\LL}_{\phi_\infty})=h_{\overline{\LL}_{\phi_\infty}}(X),
\]
(see   (ii) of Theorem \ref{maintheorem})  we deduce that
\[
\widehat{\mathrm{vol}}_X(\LL, \|\cdot\|_\phi)=h_{\overline{\LL}_\phi}(X).
\]

 \end{proof}

\begin{theorem}\label{Hodge}[A generalized Hodge index theorem] Let $Y$ be a smooth toric variety over $\Z$ of dimension $d+1$. Let $\LL$ be an equivariant line bundle on $Y$.  Let $\phi$ 
be a semipositive weight on $\LL$. Let
$X$ be a hypersurface in $Y$.  We have
\[
\widehat{\mathrm{vol}}_X(\overline{\LL}_\phi)\geq h_{\overline{\LL}_\phi}(X).
\]
\end{theorem}

\begin{proof} The proof is similar to the proof of \cite[Theorem 5.5]{HajliTAMS}. 
The $\Z$-algebra 
$\bigoplus_{k\geq 0} H^0(X,{k\LL}_{|_X})$ is generated by $\chi^m$ for
$m\in k\Delta_\LL\cap M$.\\

 Let $\alpha$ be a positive real number such that
\[
0<\alpha<1\quad\text{and}\quad \alpha \|\chi^m\|_{\sup,\phi_{|_X}}<1\quad\text{for}\quad m\in k\Delta_\LL\cap M.\]
It follows   that 
 $(\LL,\alpha \|\cdot\|_\phi)$ is ample.\\
 
 From Theorem \ref{ThmVolDeg}, we get
 \[
\widehat{\mathrm{vol}}_X(\LL,  \alpha\|\cdot\|_\phi)=\widehat{\deg}( \hat c_1(\LL, \alpha\|\cdot\|_{\phi_{|_X}} )^{d} ).
\]
 
By \cite[Proposition 4.6 and  Theorem 4.7]{HajliTAMS} we see that
\[
\widehat{\mathrm{vol}}_X(\LL,  \|\cdot\|_\phi) -\widehat{\deg}(\hat c_1(\LL, \|\cdot\|_{\phi_{|_X}})^{d})\geq \widehat{\mathrm{vol}}(\LL,  \alpha\|\cdot\|_{\phi_{|_X}})-\widehat{\deg}(\hat c_1(\LL, \alpha\|\cdot\|_{\phi_{|_X}})^{d}).\]
Therefore
\[
\widehat{\mathrm{vol}}_X(\LL,  \|\cdot\|_\phi)\geq 
\widehat{\deg}(\hat c_1(\LL, \|\cdot\|_{\phi_{|_X}})^{d}).
\]

\end{proof}

\bibliographystyle{plain}

\bibliography{biblio}

\end{document}